\newcommand{\bbz}{\mathbb{Z}}
\newcommand{\bbq}{\mathbb{Q}}
\newcommand{\bbp}{\mathbb{P}}
\newcommand{\End}{\mathrm{End}}
\newcommand{\Aut}{\mathrm{Aut}}
\newcommand{\Gal}{\mathrm{Gal}}
\newcommand{\cA}{\mathcal{A}}
\newcommand{\cS}{\mathcal{S}}
\newcommand{\mat}{\begin{pmatrix}}
\newcommand{\emat}{\end{pmatrix}}
\newcommand{\rank}{\mathrm{rank}}
\newcommand{\Spec}{\mathrm{Spec}}
\newtheorem{theorem}{Theorem}[section]
\newtheorem{proposition}[theorem]{Proposition}
\newtheorem{corollary}[theorem]{Corollary}
\newtheorem{lemma}[theorem]{Lemma}
\newtheorem{rmk}[theorem]{Remark}
\newtheorem{definition}[theorem]{Definition}
\newtheorem{conjecture}[theorem]{Conjecture}
\newtheorem{question}[theorem]{Question}
\begin{document}
 \title[Rank growth of abelian varieties]
{Rank growth of abelian varieties over certain finite Galois extensions}

\author{Seokhyun Choi and Bo-Hae Im}

\address{
Dept. of Mathematical Sciences, KAIST,
291 Daehak-ro, Yuseong-gu,
Daejeon 34141, South Korea
}
\email{bhim@kaist.ac.kr}

\address{
Dept. of Mathematical Sciences, KAIST,
291 Daehak-ro, Yuseong-gu,
Daejeon 34141, South Korea
}
\email{sh021217@kaist.ac.kr}

\date{\today}
\subjclass[2020]{Primary 11G05, 11G10}
\keywords{Abelian varieties, Elliptic curves, Jacobian varieties, Rank growth}
\thanks{Bo-Hae Im was  supported by Basic Science Research Program through the National Research Foundation of Korea(NRF) funded by the Korea government(MSIT)(NRF-2023R1A2C1002385, or RS-2023-NR076333).}

\begin{abstract}
    Let $A/K$ be an abelian variety over a number field $K$. We prove that a finite automorphism group $G \subseteq \Aut_K(X)$ of a smooth projective variety $X/K$ such that $X/G \cong \bbp_K^d$ can force the rank growth of $A$ over infinitely many mutually linearly disjoint $G$-extensions $L_i/K$. The proof is based on Hilbert irreducibility and N\'{e}ron specialization. We then combine the theorem with finite group representations to obtain explicit lower bounds for rank growth. As applications, we obtain rank growth results for Jacobian varieties and construct explicit examples. We further prove arbitrarily large rank growth of abelian varieties over symmetric extensions. Finally, we study the connection with infinite rank conjectures.
\end{abstract}

\maketitle

\section{Introduction}

Abelian varieties, as higher-dimensional generalizations of elliptic curves, lie at the heart of modern number theory and algebraic geometry. Their rich internal structure provides a vital bridge between geometry and arithmetic. A foundational result governing abelian varieties over 
number fields is the Mordell--Weil theorem, which establishes the fundamental structure of the group of rational points on these varieties.

The Mordell--Weil theorem asserts that if $A$ is an abelian variety defined over a number field $K$, then $A(K)$ is a finitely generated abelian group. This implies that $ A(K)$ can be decomposed as follows:
\begin{equation*}
    A(K) = A(K)_{tor} \oplus \mathbb{Z}^r,
\end{equation*}
where $A(K)_{tor}$ is the torsion subgroup of $A(K)$ consisting of all points of finite order, and
$r$ is the rank of
$ A(K)$. The study of torsion subgroups of abelian varieties has been an important focus in the field. For example, Mazur \cite{Maz77, Maz78}  completely classified the torsion subgroups of elliptic curves over
$\mathbb{Q}$, and Merel \cite{Mer96} established the uniform boundedness conjecture for torsion subgroups of elliptic curves over number fields.

On the other hand, the rank $r$ of an abelian variety offers deep insights into its structure and complexity relative to the field $K$. However, ranks are highly challenging to analyze, as doing so requires the development of diverse and groundbreaking approaches, and many open conjectures remain. One significant problem is understanding how much the rank can increase when the base field is extended. Although some partial results are known, this area still holds many unanswered questions.

We begin by reviewing relevant known results. Throughout this introduction, let $A/K$ be an abelian variety over a number field $K$. For a finite extension $L/K$, if $L/K$ is Galois with $\Gal(L/K)=G$, then we call $L/K$ a $G$-extension. 

The study of rank growth of abelian varieties over finite extensions originates with the work of Frey and Jarden \cite{FJ74}. They proved that for every positive integer $n$, there exists a finite extension $L$ over $K$ such that 
\[\rank(A(L)) \geq \rank(A(K)) + n.\] 
Thus, the rank of abelian varieties can grow arbitrarily large over finite extensions.

Bruin and Najman \cite{BN16} investigated restrictions on possible rank growth imposed by the Galois group of a finite Galois extension. For example, if $\lvert G \rvert$ is odd and $p$ is the smallest prime factor of $\lvert G \rvert$, then either 
\[\rank(A(L)) = \rank(A(K)) \quad \text{or} \quad \rank(A(L)) \geq \rank(A(K)) + (p-1).\] 
They obtained several similar representation-theoretic restrictions on possible rank growth. However, these results apply only when rank growth occurs; they do not establish the existence of field extensions over which the rank actually increases.

On the other hand, rank growth is not always guaranteed. Mazur and Rubin \cite{MR18} proved that, under suitable hypotheses on $A$ and $K$, there exist infinitely many finite Galois extensions $L$ over $K$ for which 
\[\rank(A(L)) = \rank(A(K)).\] 

Conversely, several authors have identified settings where rank strictly increases over infinitely many finite extensions. For an elliptic curve $E/K$,  Fearnley, Kisilevsky, and Kuwata \cite{FKK12} (for $n=3$) and later Kashyap \cite{Kas13}  (for $n \in \{3,4,6\}$) proved that if $K$ contains a primitive $n$-th root of unity, there exist infinitely many $\bbz/n\bbz$-extensions $L/K$ such that $$\rank(E(L)) > \rank(E(K)).$$
Over the rationals, David, Fearnley, and Kisilevsky \cite{DFK07} conjectured analogous rank growth over infinitely many $\bbz/p\bbz$-extensions $L/\bbq$ for $p \in \{3,5\}$, while predicting it fails for primes $p>5$.

For non-abelian extensions, Lemke Oliver and Thorne \cite{OT21} proved that for an elliptic curve $E/\bbq$ and an integer $d \geq 2$, there exist infinitely many degree $d$ extensions $L/\bbq$ with Galois closures $\cS_d$ such that 
\[\rank (E(L)) > \rank (E(\bbq)).\] 
In this direction, the second author and K\"{o}nig \cite[Question~1(c)]{IK23} asked whether the same conclusion holds for all elliptic curves over arbitrary number fields. 

Beyond elliptic curves, the second author and Wallace \cite{IW18} proved that for a Jacobian variety $J/K$ of a smooth projective curve satisfying suitable hypotheses, there exist infinitely many $\bbz/p\bbz$-extensions $L/K$ such that 
\[\rank(J(L))>\rank(J(K)).\]
While the broader literature on the rank growth of abelian varieties is extensive, we focus here only on those results most closely related to the present paper.

Our main theorem gives a general criterion for rank growth of abelian varieties over finite Galois extensions. Its proof is based on two classical theorems, Hilbert irreducibility and N\'{e}ron specialization, and similar ideas already appear in the work of Petersen~\cite{Pet06}, Im and Larsen \cite{IL10}, and Suresh \cite{Sur23}. Our contribution is to combine these ideas into a single theorem that works uniformly over infinitely many mutually linearly disjoint $G$-extensions, and at the same time, controls the rank growth over all of their intermediate fields.

\begin{theorem}\label{main_theorem}
    Let $A/K$ be an abelian variety over a number field $K$. Let $X/K$ be a smooth projective variety with a finite subgroup $G \subseteq \Aut_K(X)$ such that $X/G \cong \bbp^d_K$. Define 
    \[M := \mathrm{Mor}_K(X,A)/A(K),\]
    where we identify $A(K)$ with constant morphisms $X \rightarrow A$ over $K$.
    Then there exist infinitely many mutually linearly disjoint $G$-extensions $L_i/K$ such that 
    \[M \hookrightarrow A(L_i)/A(K)\]
    as $G$-modules and hence 
    \[\rank(A(L_i)) \geq \rank(A(K)) + \mathrm{rank}(M).\]
\end{theorem}

As an immediate consequence of Theorem~\ref{main_theorem}, we obtain the following corollary.

\begin{corollary}\label{main_theorem_corollary}
     Keep the hypotheses and notation of Theorem~\ref{main_theorem}. Assume further that there exists a non-constant morphism $f:X \rightarrow A$ over $K$.
     Then there exist infinitely many mutually linearly disjoint $G$-extensions $L_i/K$ such that 
     \[\rank(A(L_i)) > \rank(A(K)).\]
\end{corollary}

The conclusion in Theorem~\ref{main_theorem} also controls the rank growth of $A$ over all intermediate fields of $L_i/K$. This is one of the main features of our formulation of Theorem~\ref{main_theorem}.

\begin{corollary}\label{main_theorem_fixed_field_corollary}
    Keep the hypotheses and notation of Theorem~\ref{main_theorem}. Let $H \leq G$ and define 
    \[M_H := \mathrm{Mor}_K(X/H,A)/A(K).\]
Then there exist infinitely many mutually linearly disjoint $G$-extensions $L_i/K$ such that 
    \[\rank(A(L_i^H)) \geq \rank(A(K)) + \mathrm{rank}(M_H).\]
\end{corollary}

Sections~\ref{Explicit_lower_bound} to \ref{section_A4} develop the geometric and representation-theoretic consequences of this framework. In Section~\ref{Explicit_lower_bound}, we combine Theorem~\ref{main_theorem} with the representation theory of finite groups to derive explicit lower bounds for rank growth over $G$-extensions. In Section~\ref{section_Jacobian}, we apply Theorem~\ref{main_theorem} to Jacobian varieties, obtaining applications to hyperelliptic curves, modular curves, and Fermat curves. In Section~\ref{section_A4}, we construct explicit examples satisfying the hypotheses of Theorem~\ref{main_theorem}, including a family of elliptic curves whose rank grows by at least $3$ over $\cA_4$-extensions. This example was the original motivation for the present paper. 

In Section~\ref{section_Sn}, we derive two general consequences for symmetric extensions. Frey and Jarden \cite{FJ74} proved that the rank of an abelian variety can grow arbitrarily large over arbitrary finite extensions. We prove that symmetric extensions already suffice. More precisely, we obtain the following theorem.

\begin{theorem}\label{main_theorem_S_n_extensions}
    Let $A/K$ be an abelian variety over a number field $K$. Then there exists a positive integer $N$ such that for every $n \geq N$, there exist infinitely many mutually linearly disjoint $\cS_{n+1}$-extensions $L_i/K$ such that 
    \[\rank(A(L_i)) \geq \rank(A(K)) + n.\]
    If $A$ is an elliptic curve $E$, then one may take $N=1$. 
\end{theorem}

We then pass to the corresponding degree-$n$ extensions. Combined with Corollary~\ref{main_theorem_fixed_field_corollary}, we obtain the following theorem.

\begin{theorem}\label{main_theorem_degree_n_extensions}
    Let $A/K$ be an abelian variety over a number field $K$. Then there exists a positive integer $N$ such that for every $n \geq N$, there exist infinitely many degree-$n$ extensions $F_i/K$ with Galois closure $\cS_n$ such that 
    \[\rank(A(F_i)) > \rank(A(K)).\]
    If $A$ is an elliptic curve $E$, then one may take $N=2$.
\end{theorem}

For elliptic curves, Theorem~\ref{main_theorem_degree_n_extensions} holds in every degree $n \geq 2$. It therefore gives an unconditional affirmative answer to \cite[Question~1(c)]{IK23} for symmetric groups. It also extends the existence result of Lemke Oliver and Thorne~\cite{OT21} from elliptic curves over~$\mathbb Q$ to arbitrary abelian varieties over number fields, in all sufficiently large degrees.

Finally in Section~\ref{section_infinite_rank}, we apply Theorem~\ref{main_theorem} to two infinite rank conjectures, namely the Frey--Jarden conjecture and Larsen's conjecture. By applying Theorem~\ref{main_theorem} to these conjectures, we obtain geometric criteria for these conjectures.

Throughout this paper, we follow the notation in \cite{Sil86}. In particular, if $V$ is a smooth projective variety over a number field $K$, then we often identify $V$ with the set $V(\overline{K})$ of $\overline{K}$-rational points on $V$, where $\overline{K}$ is a fixed algebraic closure of $K$. We assume throughout that all varieties are geometrically integral. 

We use $C_n$ to denote the cyclic group of order~$n$, $\cS_n$ for the symmetric group of degree~$n$, $\cA_n$ for the alternating group of degree $n$, $\mathrm{PSL}_2(\bbz/N\bbz)$ for the projective special linear group of $\bbz/N\bbz$. We denote by $\zeta_n$ a primitive $n$th root of unity.

\section{Proof of Theorem~\ref{main_theorem}}\label{section_main_theorem}

In this section, we prove Theorem~\ref{main_theorem}. We fix an abelian variety $A/K$ over a number field $K$, a smooth projective variety $X/K$ with a finite subgroup $G \subseteq \Aut_K(X)$ such that $X/G \cong \bbp^d_K$, and define 
\[M := \mathrm{Mor}_K(X,A)/A(K).\] 

We begin with the following two lemmas. Let $q: X \rightarrow X/G \cong \bbp^d_K$ be the quotient morphism and set $\lvert G \rvert = m$. 

\begin{lemma}[Hilbert irreducibility theorem]\label{Hilbert}
    There exists a thin subset $\Omega_0$ of $\bbp^d(K)$ such that for every $t \notin \Omega_0$, the fiber $q^{-1}(t)$ consists of exactly $m$ number of $\overline{K}$-rational points. Moreover, for any $x \in q^{-1}(t)$, $x$ generates a $G$-extension $L_t$ over $K$. 
\end{lemma}
\begin{proof}
    It follows from applying \cite[p.123, Proposition 2]{Ser97}.
\end{proof}

Let $F=K(X/G)$, $E=K(X)$ be function fields. Then $E/F$ is a Galois extension with Galois group $G$. We show that every $E$-rational point of $A$ corresponds uniquely to a morphism $X \rightarrow A$ over $K$. 

Given a morphism $f:X \rightarrow A$ over $K$, by composing with the generic point $\eta : \Spec\: E \rightarrow X$ of $X$, we obtain an $E$-rational point $f \circ \eta : \Spec\: E \rightarrow A$ of $A$. Conversely, suppose we are given an $E$-rational point $P:\Spec\: E \rightarrow A$ of $A$. Then we obtain a rational map $f:X \rightarrow A$ over $K$. By \cite[Theorem~3.1]{Mil86}, $f$ extends uniquely to a morphism $f:X \rightarrow A$ over $K$. 

Therefore, we will freely identify morphisms $X \rightarrow A$ over $K$ with $E$-rational points of $A$. Under this identification, we have 
\[M \cong A(E)/A(K).\]
In particular, by the Lang-N\'{e}ron theorem \cite[Theorem~III.6.1]{Sil94}, $M$ is finitely generated.

Fix $t \notin \Omega_0$, choose $x \in q^{-1}(t)$, and let $L_t/K$ be a $G$-extension generated by $x$. Define the specialization homomorphism 
\[\phi_t : A(E) \rightarrow A(L_t)\quad \text{ by } \quad 
\phi_t(P) = f(x),\] 
for a morphism $f:X \rightarrow A$ over $K$ corresponding to an $E$-rational point $P$ of $A$.

\begin{lemma}[N\'{e}ron specialization theorem]\label{Neron}
    There exists a thin subset $\Omega \supseteq \Omega_0$ of~$\bbp^d(K)$ such that for every $t \notin \Omega$, the specialization homomorphism 
    \[\phi_t : A(E) \rightarrow A(L_t)\]
    is injective.
\end{lemma}
\begin{proof}
    Let $A_E$ be the base change of $A$ to $E$, and let $B := \mathrm{Res}_{E/F}(A_E)$ be the Weil restriction of $A_E$ from $E$ to $F$. Apply \cite[p.152, Theorem]{Ser97} to $B/F$ to obtain a thin set $\Omega \supseteq \Omega_0$ of $\bbp^d(K)$ such that for every $t \notin \Omega$, the specialization homomorphism 
    \[\phi_t : B(F) \rightarrow B_t(K)\]
    is injective. By the properties of the Weil restriction, $B(F) \cong A(E)$, $B_t(K) \cong A(L_t)$, and the specialization homomorphism $\phi_t : B(F) \rightarrow B_t(K)$ coincides with our specialization homomorphism $\phi_t : A(E) \rightarrow A(L_t)$.
\end{proof}

We are now ready to prove Theorem~\ref{main_theorem}.

\begin{proof}[Proof of Theorem~\ref{main_theorem}]
    By Lemmas~\ref{Hilbert} and \ref{Neron}, there exists a thin subset $\Omega$ of $\bbp^d(K)$ such that for every $t \notin \Omega$, the fiber $q^{-1}(t)$ consists of exactly $m$ number of $\overline{K}$-rational points and if we let $L_t/K$ be a $G$-extension generated by any $x \in q^{-1}(t)$, then the specialization homomorphism 
    \[\phi_t : A(E) \rightarrow A(L_t)\]
    is injective.
    
    Fix $t \notin \Omega$, and choose $x \in q^{-1}(t)$. Let $L_t/K$ be a $G$-extension generated by $x$. Define 
    \[F_t : M \xrightarrow{\sim} A(E)/A(K) \xrightarrow{\phi_t \bmod{A(K)}} A(L_t)/A(K).\]
    Since $\phi_t$ is injective, $F_t$ is also injective. We next show that $F_t$ is $G$-equivariant. Recall that $G$ acts on $M$ by 
    \[g \cdot f \bmod{A(K)} = f \circ g^{-1} \bmod{A(K)}.\]
    Let $\iota_t:G \xrightarrow{\sim} \Gal(L_t/K)$ be defined by 
    \[\iota_t(g)(x) = g^{-1}x,\quad g \in G.\] 
    Then $G$ acts on $A(L_t)/A(K)$ via $\iota_t$. Given $g \in G$ and $f \bmod{A(K)} \in M$, we have 
    \begin{align*}
        F_t(g \cdot (f \bmod{A(K)})) &= F_t(f \circ g^{-1} \bmod{A(K)}) \\
        &= f(g^{-1}(x)) \bmod{A(K)} \\
        &= f(\iota_t(g)(x)) \bmod{A(K)} \\
        &= \iota_t(g)(f(x)) \bmod{A(K)} \\
        &= \iota_t(g)(F_t(f \bmod{A(K)})) \\
        &= g \cdot F_t(f \bmod{A(K)}).
    \end{align*}
    Therefore, $F_t$ is an injective $G$-equivariant homomorphism.
    
    This proves that for every $t \notin \Omega$, 
    \[M \hookrightarrow A(L_t)/A(K)\]
    as $G$-modules.
    
    We now use induction to construct infinitely many such mutually linearly disjoint extensions, thereby completing the proof. Suppose we have found mutually linearly disjoint $G$-extensions $L_1,\ldots,L_k$ over $K$ satisfying
    \[M \hookrightarrow A(L_i)/A(K).\]
    Let $L = L_1 \cdots L_k$ be the composite field. By applying the above procedure with $K$ replaced by $L$, we obtain a thin subset $\Omega_L$ of $\bbp^d(L)$. By \cite[p.128, Proposition]{Ser97}, the intersection $\Omega_K := \Omega_L \cap \bbp^d(K)$ is a thin subset of $\bbp^d(K)$. As $\Omega$ and $\Omega_K$ are both thin subsets of $\bbp^d(K)$, their union $\Omega \cup \Omega_K$ is also a thin subset of $\bbp^d(K)$. Take $t \notin \Omega \cup \Omega_K$ and let $L_{k+1} := L_t$ be the $G$-extension over $K$ corresponding to $t$. Since $t \notin \Omega_L$, $L_{k+1}L$ is a $G$-extension over $L$. Therefore, $L_{k+1}$ is linearly disjoint from $L$ over $K$, which implies that $L_1,\ldots,L_k,L_{k+1}$ are mutually linearly disjoint over $K$.
\end{proof}

We conclude this section with a proof of Corollary~\ref{main_theorem_fixed_field_corollary}.

\begin{proof}[Proof of Corollary~\ref{main_theorem_fixed_field_corollary}]
    By Theorem~\ref{main_theorem}, there exist infinitely many mutually linearly disjoint $G$-extensions $L_i/K$ such that 
    \[M \hookrightarrow A(L_i)/A(K).\]
    By taking $H$-invariants, we obtain 
    \[M^H \hookrightarrow (A(L_i)/A(K))^H,\]
    so that
    \begin{equation}\label{fixed_field_eq}
        \rank(M^H) \leq \rank((A(L_i)/A(K))^H).
    \end{equation}

    Consider the exact sequences 
    \[0 \longrightarrow A(K) \longrightarrow \mathrm{Mor}_K(X,A) \longrightarrow M \longrightarrow 0\]
    and 
    \[0 \longrightarrow A(K) \longrightarrow A(L_i) \longrightarrow A(L_i)/A(K) \longrightarrow 0.\]
    By taking $H$-invariants, we obtain the exact sequences
    \[0 \longrightarrow A(K) \longrightarrow \mathrm{Mor}_K(X/H,A) \longrightarrow M^H \longrightarrow H^1(H,A(K))\]
    and 
    \[0 \longrightarrow A(K) \longrightarrow A(L_i^H) \longrightarrow (A(L_i)/A(K))^H \longrightarrow H^1(H,A(K)).\]
    Since $H$ is finite and $A(K)$ is finitely generated, $H^1(H,A(K))$ is finite. Therefore, 
    \[\rank(M^H) = \rank(M_H)\]
    and 
    \[\rank((A(L_i)/A(K))^H) = \rank(A(L_i^H)) - \rank(A(K)).\]
    From \eqref{fixed_field_eq}, we conclude that 
    \[\rank(A(L_i^H)) \geq \rank(A(K)) + \rank(M_H).\]
\end{proof}

\section{Explicit lower bounds for rank growth}\label{Explicit_lower_bound}

Theorem~\ref{main_theorem} reduces the study of rank growth of abelian varieties to the study of the $G$-module 
\[M := \mathrm{Mor}_K(X,A)/A(K).\]
Indeed, for infinitely many mutually linearly disjoint $G$-extensions $L_i/K$, we have an injection 
\[M \hookrightarrow A(L_i)/A(K).\] 
Thus, any lower bound for the rank of $M$ yields a corresponding lower bound for rank growth. In this section, we obtain explicit lower bounds by studying the $G$-submodule~$N$ of $M$ generated by a single morphism $f:X \rightarrow A$ over $K$.

\begin{corollary}\label{N_corollary}
    Let $A/K$ be an abelian variety over a number field $K$. Let $X/K$ be a smooth projective variety with a finite subgroup $G \subseteq \Aut_K(X)$ such that $X/G \cong \bbp^d_K$. Let $f:X \rightarrow A$ be a morphism over $K$. Define 
    \begin{equation}\label{M_def}
        M := \mathrm{Mor}_K(X,A)/A(K),
    \end{equation}
    where we identify $A(K)$ with constant morphisms $X \rightarrow A$ over $K$, and let $N$ be the $G$-submodule of $M$ generated by $f \bmod{A(K)}$.
    
    Then there exist infinitely many mutually linearly disjoint $G$-extensions $L_i/K$ such that 
    \[N \hookrightarrow A(L_i)/A(K)\]
    as $G$-modules and hence 
    \[\rank(A(L_i)) \geq \rank(A(K)) + \rank(N).\]
\end{corollary}
\begin{proof}
    Since $N$ is a $G$-submodule of $M$, the corollary follows from Theorem~\ref{main_theorem}.
\end{proof}

To study the $G$-module $N$, we first record that $M$ is torsion-free. 

\begin{proposition}\label{M_torsion_free}
    The abelian group $M$ given in \eqref{M_def} is torsion-free.
\end{proposition}
\begin{proof}
    Let $f \in \mathrm{Mor}_K(X,A)$ and suppose that $nf \in A(K)$ for some positive integer $n$. Then the morphism $[n]f:X \rightarrow A$ is constant with image $Q \in A(K)$. Since $[n]:A \rightarrow A$ is finite, $[n]^{-1}(Q)$ is a finite set. Since $X$ is connected, $f:X \rightarrow A$ must be constant. Hence, $f \in A(K)$.
\end{proof}

Consider the $G$-equivariant homomorphism 
\[\phi : \bbz[G] \longrightarrow M,\quad \sum_{g \in G} a_g g \longmapsto \sum_{g \in G} a_g(f \circ g^{-1}) \bmod{A(K)}.\]
Its image is precisely $N$. Tensoring $\phi$ with $\bbq$, we obtain the $G$-equivariant homomorphism 
\begin{equation}\label{Phi_definition}
    \Phi : \bbq[G] \longrightarrow M \otimes_\bbz \bbq.
\end{equation}
Since $M$ is torsion-free by Proposition~\ref{M_torsion_free}, we obtain the following commutative diagram:
\[
\begin{tikzcd} 
\bbz[G] \arrow[r, "\phi"] \arrow[d, hook] & M \arrow[d, hook] \\ \bbq[G] \arrow[r, "\Phi"] & M \otimes_\bbz \bbq.
\end{tikzcd}
\]
Hence, the image of $\Phi$ is precisely $N \otimes_\bbz \bbq$. Consequently, computing the rank of $N$ reduces to determining the kernel of $\Phi$ in $\bbq[G]$.

We begin with cyclic subgroups of $G$. Let $\sigma \in G$ be an automorphism of order $n$. We will compute the rank of the subgroup $P$ of $N$ spanned by 
\[f \bmod{A(K)}, \quad f \circ \sigma \bmod{A(K)}, \quad \ldots, \quad f \circ \sigma^{n-1} \bmod{A(K)}\]
in terms of the minimal polynomial of $f$ with respect to $\sigma$.

We first establish some notation. For an arbitrary morphism $f:X \rightarrow A$ and an integral polynomial 
\[h(T)=a_0 + a_1T + \cdots + a_mT^m \in \bbz[T],\]
we define
\[f \circ h(\sigma) := a_0f + a_1f \circ \sigma + \cdots + a_mf \circ \sigma^m.\]
This defines the $\bbz[T]$-action on $\mathrm{Mor}_K(X,A)$, hence on $M$. The $\bbz[T]$-action on $M$ naturally extends to the $\bbq[T]$-action on $M \otimes_\bbz \bbq$. It is easy to see that $M \otimes_\bbz \bbq$ becomes a $\bbq[T]$-module under this action. 

For a fixed morphism $f:X \rightarrow A$, let $I$ be the annihilator ideal of $f \bmod{A(K)}$ in the $\bbq[T]$-module $M \otimes_\bbz \bbq$. Since $\sigma$ has order $n$, $T^n-1 \in I$. Therefore, $I$ is nonzero and since $\bbq[T]$ is a principal ideal domain, $I$ has a unique monic generator $p(T)$. By Gauss's lemma with $p(T) \mid T^n-1$, we conclude that $p(T)$ is integral. We call the unique $p(T) \in \bbz[T]$ the minimal polynomial of $f$ with respect to $\sigma$.

Recall the $G$-equivariant homomorphism given in \eqref{Phi_definition}, 
\[\Phi : \bbq[G] \longrightarrow M \otimes_\bbz \bbq.\]
If we compose $\Phi$ with 
\[\bbq[T] \longrightarrow \bbq[G],\quad T \longmapsto \sigma^{-1},\]
then we obtain the following homomorphism 
\[\Psi : \bbq[T] \longrightarrow M \otimes_\bbz \bbq.\]
The annihilator ideal $I$ of $f \bmod{A(K)}$ in the $\bbq[T]$-module $M \otimes_\bbz \bbq$ is exactly the kernel of~$\Psi$.

\begin{corollary}\label{cyclic_N_corollary}
    Keep the hypotheses and notation of Corollary~\ref{N_corollary}, and let $\sigma \in G$ be an automorphism of order $n$. Let $p(T)$ be the minimal polynomial of $f$ with respect to~$\sigma$ and $r$ be the degree of $p(T)$.
     Then there exist infinitely many mutually linearly disjoint $G$-extensions $L_i$ over $K$ such that 
    \[\rank(A(L_i))\geq \rank(A(K))+r.\]
\end{corollary}
\begin{proof}
    Let $P$ be the subgroup of $N$ spanned by
    \[f \bmod{A(K)}, ~~~ f \circ \sigma \bmod{A(K)},~~~\ldots,~~~f \circ \sigma^{n-1} \bmod{A(K)}.\]
    Then $P \otimes_\bbz \bbq$ is the image of $\Psi$. By the first isomorphism theorem, $P \otimes_\bbz \bbq \cong \bbq[T]/I$. Since $p(T)$ has degree $r$, $P \otimes_\bbz \bbq \cong \bbq[T]/I$ has $\bbq$-dimension $r$, and hence $P$ has rank $r$. The conclusion now follows from Corollary~\ref{N_corollary}.
\end{proof}

When $\sigma$ has prime order, the factorization of $T^p-1$ leaves only four possible minimal polynomials. This yields the following explicit criteria.

\begin{corollary}\label{p_cyclic_N_corollary}
    Keep the hypotheses and notation of Corollary~\ref{cyclic_N_corollary}, and assume further that $\sigma \in G$ has a prime order $p$. 
    \begin{enumerate}[\normalfont (a)]
        \item If $f \circ \sigma - f$ is non-constant, then there exist infinitely many mutually linearly disjoint $G$-extensions $L_i$ over $K$ such that 
        \[\rank(A(L_i))\geq \rank(A(K))+(p-1).\]
        \item If $f \circ \sigma - f$ and $f \circ \sigma^{p-1} + \cdots + f \circ \sigma + f$ are both non-constant, then there exist infinitely many mutually linearly disjoint $G$-extensions $L_i$ over $K$ such that 
        \[\rank(A(L_i))\geq \rank(A(K))+p.\]
    \end{enumerate}
\end{corollary}
\begin{proof}
    Let $p(T)$ be the minimal polynomial of $f \bmod{A(K)}$ in the $\bbq[T]$-module $M \otimes_\bbz \bbq$, and let $r$ be its degree. Note that $p(T) \mid T^p-1$. From the decomposition 
    \[T^p-1 = (T-1)(T^{p-1}+\cdots+T+1),\]
    there are four possibilities for $p(T)$:
    \[1,\quad T-1,\quad T^{p-1}+\cdots+T+1,\quad T^p-1.\]
    \begin{enumerate}[(a)]
        \item If $f \circ \sigma - f$ is non-constant, then $p(T) \nmid T-1$, and hence $r \geq p-1$.
        \item If $f \circ \sigma - f$ and $f \circ \sigma^{p-1} + \cdots + f \circ \sigma + f$ are both non-constant, then $p(T) \nmid T-1$ and $p(T) \nmid T^{p-1}+\cdots+T+1$, and hence $r=p$.
    \end{enumerate}
    The conclusion now follows from Corollary~\ref{cyclic_N_corollary}.
\end{proof}

We next consider the case where $G \cong \cS_n$ and $f$ is invariant under a subgroup $H \cong \cS_{n-1}$ of $G$.

\begin{corollary}\label{S_n_N_corollary}
    Keep the hypotheses and notation of Corollary~\ref{N_corollary}, suppose $G \cong \cS_n$, that $f$ is non-constant, and that $f$ is invariant under $H \cong \cS_{n-1}$.
Then there exist infinitely many mutually linearly disjoint $\cS_n$-extensions $L_i/K$ such that 
    \[\rank(A(L_i))\geq \rank(A(K))+(n-1).\]
\end{corollary}
\begin{proof}
    Let $V=\bbq[G/H]$ be the rational permutation representation of $G$. Since $f$ is invariant under $H$, the $G$-equivariant homomorphism \eqref{Phi_definition} 
    \[\Phi : \bbq[G] \longrightarrow M \otimes_\bbz \bbq\]
    factors through $V$. Let 
    \[\Psi : V \longrightarrow M \otimes_\bbz \bbq\]
    be the induced $G$-equivariant homomorphism. If we let $\{g_1,\ldots,g_n\}$ to be the representatives of $G/H$, then $\Psi$ is explicitly defined by 
    \[\Psi : g_i \longmapsto f \circ g_i^{-1} \bmod{A(K)}.\]

    We first prove that $\Psi(t) = 0$ where 
    \[t := \sum_{i=1}^n g_i \in V.\]
    Consider 
    \begin{equation}\label{constant_morphism_eq}
        f \circ g_1^{-1} + \cdots + f \circ g_n^{-1} : X \longrightarrow A.
    \end{equation}
   Note that \eqref{constant_morphism_eq} is invariant under $G$, so \eqref{constant_morphism_eq} factors through $X/G \cong \bbp_K^d$. However, every morphism from $\mathbb P^d$ to an abelian variety is constant (\cite[Corollary~3.9]{Mil86}), so \eqref{constant_morphism_eq} is constant. It follows that $\Psi(t)=0$.

    Recall that $V$ decomposes into two irreducible representations. Explicitly, 
    \[V \cong T \oplus S,\]
    where 
    \[T := \bbq t = \{at\:|\:a \in \bbq\}\]
    is the trivial representation and 
    \[S := \left\{\sum_{i=1}^n a_ig_i \:|\:\sum_{i=1}^n a_i = 0\right\}\]
    is the standard representation. Since $\Psi(t)=0$, $\Psi$ is zero on $T$. Since $f$ is non-constant and $M$ is torsion-free by Proposition~\ref{M_torsion_free}, $\Psi$ is nonzero. It follows that $\Psi$ is nonzero on~$S$. Since $S$ is irreducible, $\Psi(S)$ has dimension $n-1$. Hence, 
    \[\Psi(V) \cong \Psi(T) \oplus \Psi(S)\]
    has dimension $n-1$.

    Since the image of $\Psi$ is the image of $\Phi$, which is $N \otimes_\bbz \bbq$, the conclusion follows from Corollary~\ref{N_corollary}.
\end{proof}

\section{Rank growth of Jacobian varieties}\label{section_Jacobian}

In this section, we apply Theorem~\ref{main_theorem} to Jacobian varieties. Let $X/K$ be a smooth projective curve with $X(K)$ nonempty and $J/K$ be the Jacobian variety of $X$. The universal property of the Jacobian allows us to identify the $G$-module 
\[M := \mathrm{Mor}_K(X,A)/A(K)\]
with the $G$-module of group homomorphisms from $J$ to $A$. 

\begin{corollary}\label{Jacobian_abelian_variety_corollary}
    Let $A/K$ be an abelian variety over a number field $K$. Let $X/K$ be a smooth projective curve with a finite subgroup $G \subseteq \Aut_K(X)$ such that $X/G \cong \bbp_K^1$. Let $X(K)$ be nonempty and $J/K$ be the Jacobian variety of $X$. 
    
    Then there exist infinitely many mutually linearly disjoint $G$-extensions $L_i/K$ such that 
    \[\mathrm{Hom}_K(J,A) \hookrightarrow A(L_i)/A(K)\]
    as $G$-modules and hence 
    \[\rank(A(L_i)) \geq \rank(A(K)) + \rank(\mathrm{Hom}_K(J,A)).\]
\end{corollary}
\begin{proof}
    We show that 
    \[M := \mathrm{Mor}_K(X,A)/A(K) \cong \mathrm{Hom}_K(J,A)\]
    as $G$-modules. Then the conclusion follows from Theorem~\ref{main_theorem}.

    By the universal property of the Jacobian, there exists a surjective homomorphism 
    \[\Omega : \mathrm{Mor}_K(X,A) \longrightarrow \mathrm{Hom}_K(J,A)\]
    with $\ker \Omega = A(K)$. We show that $\Omega$ is $G$-equivariant. Recall that $G$ acts on $\mathrm{Mor}_K(X,A)$ by 
    \[g \cdot f = f \circ g^{-1}.\]
    Again by the universal property of the Jacobian, there exists a group homomorphism $\iota:\Aut_K(X) \rightarrow \Aut_K(J)$. Then $G$ acts on $\mathrm{Hom}_K(J,A)$ by 
    \[g \cdot F = F \circ \iota(g)^{-1}.\]
    Given $g \in G$ and $f \in \mathrm{Mor}_K(X,A)$, we have 
    \[\Omega(g \cdot f) = \Omega(f \circ g^{-1}) = \Omega(f) \circ \iota(g)^{-1} = g \cdot \Omega(f).\]
    Therefore, $\Omega$ is a surjective $G$-equivariant homomorphism with $\ker \Omega = A(K)$.
\end{proof}

\begin{corollary}\label{Jacobian_corollary}
    Let $X/K$ be a smooth projective curve over a number field $K$ with a finite subgroup $G \subseteq \Aut_K(X)$ such that $X/G \cong \bbp_K^1$ and $X(K)$ nonempty. Let $J/K$ be the Jacobian variety of $X$.
Then there exist infinitely many mutually linearly disjoint $G$-extensions $L_i/K$ such that 
    \[\End_K(J) \hookrightarrow J(L_i)/J(K)\]
    as $G$-modules and hence 
    \[\rank(J(L_i)) \geq \rank(J(K)) + \rank(\End_K(J)).\]
\end{corollary}
\begin{proof}
    It follows from Corollary~\ref{Jacobian_abelian_variety_corollary} by taking $A=J$.
\end{proof}

\begin{corollary}\label{Jacobian_factor_corollary}
    Keep the hypotheses and notation of Corollary~\ref{Jacobian_corollary} and suppose 
    \[J \cong_K A^n \times B\]
    for abelian varieties $A,B$ and positive integer $n$.
    Then there exist infinitely many mutually linearly disjoint $G$-extensions $L_i/K$ such that 
    \[\rank(A(L_i)) \geq \rank(A(K)) + n \cdot \rank(\End_K(A)).\]
\end{corollary}
\begin{proof}
    In the isogeny category, 
    \[\mathrm{Hom}_K(J,A) \otimes_\bbz \bbq\]
    contains 
    \[\End_K(A)^n \otimes_\bbz \bbq.\]
    Therefore, 
    \[\rank(\mathrm{Hom}_K(J,A)) \geq n \cdot \rank(\End_K(A)).\]
    Now the proof is completed by Corollary~\ref{Jacobian_abelian_variety_corollary}.
\end{proof}

As the first application, we consider hyperelliptic curves. 

\begin{corollary}\label{hyperelliptic_corollary}
    Let $X/K$ be a hyperelliptic curve with $X(K)$ nonempty. Let $J/K$ be the Jacobian variety of $X$. Then there exist infinitely many mutually linearly disjoint quadratic extensions $L_i/K$ such that 
    \[\rank(J(L_i)) \geq \rank(J(K)) + \rank(\End_K(J)).\]
\end{corollary}
\begin{proof}
    Let $\iota \in \Aut_K(X)$ be the hyperelliptic involution. Then $X/\langle \iota \rangle$ has genus zero. Since $X(K)$ is nonempty, so is $(X/\langle \iota \rangle)(K)$, and hence $X/\langle \iota \rangle \cong \bbp_K^1$. Now the conclusion follows from Corollary~\ref{Jacobian_corollary}.
\end{proof}

We next consider modular curves $X(N)$. Their definition and basic properties are discussed in \cite{DS05}.

\begin{corollary}\label{modular_curve}
    Let $N \geq 3$ be an integer and let $K$ be a number field containing a primitive $N$th root $\zeta_N$. Let $X(N)/K$ be the modular curve and let $J(N)/K$ be the Jacobian variety of $X(N)$.    Then there exist infinitely many mutually linearly disjoint $\mathrm{PSL}_2(\bbz/N\bbz)$-extensions $L_i/K$ such that
    \[\rank (J(N)(L_i)) \geq \rank (J(N)(K))+\rank(\End_K(J(N))).\]
\end{corollary}
\begin{proof}
    The group $\mathrm{PSL}_2(\bbz/N\bbz)$ acts on $X(N)$ over $K$ and the quotient $X(N)/\mathrm{PSL}_2(\bbz/N\bbz)$ is isomorphic to $X(1) \cong \bbp_K^1$. For details, see \cite{DS05}.
\end{proof}

We finally consider Jacobians with complex multiplication. If $X/K$ has genus $g$ and $J/K$ has complex multiplication over $K$, then 
\[\rank(\End_K(J)) \geq 2g.\]
As an example, we consider Fermat curves.

For an integer $n \geq 3$, let $X_n/\bbq$ be the smooth projective curve in $\bbp_\bbq^2$ defined by 
\begin{equation}\label{Fermat_curve}
    x^n+y^n = z^n.
\end{equation}
Note that $X_n(\bbq)$ is nonempty. Let $J_n/\bbq$ be the Jacobian variety of $X_n$. 

Let $K$ be a number field containing $\zeta_n$. The curve $X_n/K$ admits natural cyclic automorphisms
\[\sigma_n : (x:y:z) \longmapsto (\zeta_n x : y : z),\quad \tau_n : (x:y:z) \longmapsto (x:\zeta_n y:z).\]

\begin{corollary}\label{Fermat_corollary}
    Let $n \geq 3$ be an integer and let $K$ be a number field containing $\zeta_n$. Let $X_n/K$ be the Fermat curve defined by \eqref{Fermat_curve} and let $J_n/K$ be the Jacobian variety of $X_n$. 
    Then there exist infinitely many mutually linearly disjoint $C_n$-extensions $L_i/K$ such that
    \[\rank (J_n(L_i)) \geq \rank (J_n(K))+(n-1)(n-2).\]
\end{corollary}
\begin{proof}
    Let $g$ be the genus of $X_n/K$. By the genus-degree formula, $g = (n-1)(n-2)/2$. By \cite[p.91]{Lim91}, $J_n/K$ has complex multiplication over $K$ so that 
    \[\rank(\End_K(J_n)) \geq 2g = (n-1)(n-2).\]
    Finally, note that $X_n/\langle \sigma_n \rangle \cong \bbp_K^1$ and $\langle \sigma_n \rangle \cong C_n$. Now the conclusion follows from Corollary~\ref{Jacobian_corollary}.
\end{proof}

\section{Explicit examples}\label{section_A4}

In this section, we illustrate Theorem~\ref{main_theorem} through two explicit examples. Our primary example focuses on the Legendre family, demonstrating a rank growth of at least~$3$ over infinitely many mutually linearly disjoint $\cA_4$-extensions. We conclude with an example arsing from the modular curve $X(7)$.

Rather than starting from a given abelian variety, we proceed in the opposite direction: we first choose a (hyperelliptic) curve $X$ with a prescribed automorphism group $G$ (which can be constructed using~\cite{Sha03}), and then obtain an elliptic curve $E$ as a suitable quotient of $X$ by $H \leq \Aut_K(X)$. This approach is particularly advantageous for applying Corollary~\ref{cyclic_N_corollary}, as illustrated below.

\begin{proposition}\label{non_constant_proposition}
    Let $X/K$ be a smooth projective curve over a number field $K$. Let $H \subseteq \Aut_K(X)$ be a finite subgroup such that $X/H$ is an elliptic curve $E$. Let $\sigma \in \Aut_K(X)$ be an automorphism of order $n$ and $f:X \rightarrow X/H = E$ be the quotient morphism.
    \begin{enumerate}[\normalfont (a)]
        \item Let $G_1$ be the subgroup of $\Aut_K(X)$ generated by $H$ and $\bigcap\limits_{i=1}^{n-1}\sigma^{-i}H\sigma^i$. If $X/G_1$ has genus~$0$, then $f \sigma^{n-1} + \cdots + f \sigma + f$ is non-constant.
        \item Let $G_2$ be the subgroup of $\Aut_K(X)$ generated by $\sigma$ and $H$. If $X/G_2$ has genus~$0$, then $f \sigma - f$ is non-constant.
    \end{enumerate}
\end{proposition}
\begin{proof}
    (a) Suppose for the sake of contradiction that $f \sigma^{n-1} + \cdots + f \sigma + f$  is constant. Then for any
    \[g \in \bigcap_{i=1}^{n-1}\sigma^{-i}H\sigma^i,\]
    we have
    \begin{align*}
        f\sigma^{n-1}+\cdots+f\sigma+f &= (f\sigma^{n-1}+\cdots+f\sigma+f)g \\
        &= f(\sigma^{n-1} g \sigma^{-(n-1)})\sigma^{n-1} + \cdots + f(\sigma g \sigma^{-1})\sigma + fg \\
        &= f\sigma^{n-1} + \cdots + f\sigma + fg.
    \end{align*}
    Thus $fg=f$, so $f$ is invariant under $\bigcap_{i=1}^{n-1}\sigma^{-i}H\sigma^i$. As $f$ is also $H$-invariant, it is invariant under $G_1$. Hence $f$ factors through $X/G_1$. Since $X/G_1$ has genus $0$ and $E$ has genus $1$, every morphism $X/G_1 \rightarrow E$ is constant. Therefore, $f$ is constant, which is a contradiction.

    (b) Suppose $f \sigma - f$ is constant. Then there exists $T \in E(\overline{K})$ such that
    \[f\sigma = f + T.\]
    Then, this implies
    \[f = f\sigma^n = f + [n]T,\]
    so $[n]T=O$. It follows that
    \[[n]f\sigma = [n]f,\]
    and hence $[n]f$ is $\sigma$-invariant. Since $f$ is $H$-invariant, so is $[n]f$, and therefore $[n]f$ is invariant under $G_2$. Thus $[n]f$ factors through $X/G_2$. As in part~(a), every morphism from a genus $0$ curve to $E$ is constant, so $[n]f$ is constant. Since $[n]:E \rightarrow E$ is finite and $X$ is connected, this forces $f$ itself to be constant, yielding a contradiction.
\end{proof}

\begin{corollary}\label{explicit_example_corollary}
    Let $X/K$ be a smooth projective curve over a number field $K$. Let $G,H \subseteq \Aut_K(X)$ be finite subgroups such that $X/G \cong \bbp_K^1$ and $X/H$ is an elliptic curve $E$. Let $\sigma \in G$ be an automorphism of prime order $p$ and $f:X \rightarrow X/H = E$ be the quotient morphism. Let $G_1,G_2 \subseteq \Aut_K(X)$ be as in Proposition~\ref{non_constant_proposition}.
    \begin{enumerate}[\normalfont (a)]
        \item If $X/G_2$ has genus $0$, then there exist infinitely many mutually linearly disjoint $G$-extensions $L_i$ over $K$ such that 
        \[\rank(E(L_i)) \geq \rank(E(K)) + (p-1).\]
        \item If $X/G_1$ and $X/G_2$ both have genus $0$, then there exist infinitely many mutually linearly disjoint $G$-extensions $L_i$ over $K$ such that 
        \[\rank(E(L_i)) \geq \rank(E(K)) + p.\]
    \end{enumerate}
\end{corollary}
\begin{proof}
    Apply Corollary~\ref{p_cyclic_N_corollary} with Proposition~\ref{non_constant_proposition}.
\end{proof}

We now apply Corollary~\ref{explicit_example_corollary} to the Legendre family. We construct a concrete hyperelliptic curve $X/K$ that satisfies the conditions of Corollary~\ref{explicit_example_corollary}. 

\begin{theorem}\label{rank_A4}
    Let $K$ be a number field containing $\sqrt{-1}$. For each $a \in K$ such that $a \neq 0,1$ and $a^2-a+1 \neq 0$, let
    \begin{equation}\label{Ea}
        E_a/K : y^2=x(x-1)(x-a)
    \end{equation}
    be an elliptic curve defined over $K$. If $K$ contains $\sqrt{a(a-1)(a^2-a+1)}$, then there exist infinitely many mutually linearly disjoint $\cA_4$-extensions $L_i$ over $K$ such that
    \[\rank(E_a(L_i))\geq \rank(E_a(K))+3.\]
\end{theorem}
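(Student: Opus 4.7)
The strategy is to deduce Theorem~\ref{rank_A4} from Theorem~\ref{main_theorem}(a) with $A = E_a$, $G \cong \cA_4$, and $\sigma \in G$ of order $p = 3$: provided one exhibits a smooth projective $K$-variety $X$ carrying a faithful $\cA_4$-action with $X/\cA_4 \cong \bbp^d$, together with a $K$-morphism $f : X \to E_a$ satisfying (VC), (NC1), and (NC2), case~(a) yields rank growth of at least $3$ over infinitely many linearly disjoint $\cA_4$-extensions of $K$, which is the desired conclusion.

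Since $\cA_4$ does not embed in $\Aut(E_a)$ as soon as $j(E_a) \neq 0$---and $a^2 - a + 1 \neq 0$ is precisely the condition $j(E_a) \neq 0$---the variety $X$ cannot be $E_a$ itself. I would instead build $X$ from several copies of $E_a$; a natural candidate is $X = E_a^3$ (or a suitable $\cA_4$-invariant subvariety thereof), realizing $\cA_4 = V_4 \rtimes \langle \sigma \rangle$ through a combination of $2$-torsion translations and $[-1]$-involutions on the normal $V_4$-component, and through cyclic permutation of the three factors on $\sigma$. The hypothesis $\sqrt{-1} \in K$ secures $K$-rationality of the twisting automorphisms involved, while $\sqrt{a(1-a)(a^2-a+1)} \in K$ guarantees the $K$-rational point
\[
P_0 = \bigl(a(1-a),\, a\sqrt{a(1-a)(a^2-a+1)}\bigr) \in E_a(K),
\]
whose membership in $E_a$ is immediate from the identity $y_0^2 = x_0(x_0-1)(x_0-a) = a^3(1-a)(a^2-a+1)$ at $x_0 = a(1-a)$. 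The point $P_0$ is then used to calibrate $f$ so that (VC) holds on the nose.

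With the construction in hand, the remaining hypotheses of Theorem~\ref{main_theorem} are verified as follows. The quotient $X/\cA_4 \cong \bbp^d$ is checked by exhibiting $d+1$ algebraically independent $\cA_4$-invariant functions on $X$ generating the function field. For (VC), observe that $\sum_{g \in \cA_4} f(g \cdot)$ is $\cA_4$-invariant and therefore descends along $X \to X/\cA_4 \cong \bbp^d$ to a morphism $\bbp^d \to E_a$; any such morphism is constant, so (VC) reduces to arranging that this constant equals $O$, which is the role played by $P_0$. For (NC1) and (NC2), one computes $f + f\sigma + f\sigma^2$ and $f - f\sigma$ directly and shows both are non-constant; this follows from the non-triviality of $f$ together with the genuine permutation action of $\sigma$ on three distinct factors, which prevents these expressions from collapsing to constant morphisms.

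The principal obstacle is the explicit construction in the first step: one must balance the requirement $X/\cA_4 \cong \bbp^d$ (which demands enough non-translational automorphisms, such as $[-1]$, to rationalize the quotient) against the requirement (VC) (which forces a precise cancellation under $\cA_4$-summation and hence restricts the admissible $f$). The hypotheses $\sqrt{-1},\sqrt{a(1-a)(a^2-a+1)} \in K$ are exactly the arithmetic input that makes this delicate simultaneous construction descend from $\overline{K}$ to $K$, after which Theorem~\ref{main_theorem}(a) applies and yields the conclusion.
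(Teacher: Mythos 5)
Your proposal correctly identifies the reduction --- everything comes down to producing a pair $(X,f)$ with a faithful $\cA_4$-action, $X/\cA_4\cong\bbp^d(K)$, and (VC), (NC1), (NC2) --- but that construction \emph{is} the proof, and you explicitly defer it (``the principal obstacle is the explicit construction''). Worse, the one concrete candidate you name cannot work. For \emph{any} action of $\cA_4$ on $E_a^3$ by variety automorphisms, the linear part of each element lies in $\GL_3(\End(E_a))$, and since $\cA_4$ has abelianization $C_3$ it admits no nontrivial homomorphism to $\End(E_a)^\times$ (which is $\{\pm1\}$ or $\{\pm1,\pm i\}$ here, as $j\neq 0$ is forced by $a^2-a+1\neq 0$); hence the determinant character is trivial, the holomorphic $3$-form $dz_1\wedge dz_2\wedge dz_3$ is $\cA_4$-invariant (translations act trivially on forms), and it descends to a nonzero holomorphic $3$-form on the smooth locus of $E_a^3/\cA_4$. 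Since $\bbp^3$ carries no such form, $E_a^3/\cA_4\not\cong\bbp^3$. The paper's Section~\ref{section_VC} in fact discusses essentially your idea (acting on $A^m$ through the orientation-preserving subgroup of a Weyl group, with $\cA_4=W(D_3)^+$, via Proposition~\ref{vac}) and deliberately sets it aside as impractical. Your reading of the hypothesis $\sqrt{a(1-a)(a^2-a+1)}\in K$ is also off target: the point $P_0=(a(1-a),\,a\sqrt{a(1-a)(a^2-a+1)})$ does lie on $E_a$, but it plays no role; (VC) is never arranged by ``calibrating'' $f$ with a rational point.

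What the paper actually does is take for $X$ the genus-$5$ hyperelliptic curve $y^2=x^{12}-\lambda x^{10}-33x^8+2\lambda x^6-33x^4-\lambda x^2+1$, whose automorphism group is $\bbz/2\bbz\times\cA_4$ by Shaska's classification, with $f$ the degree-$4$ quotient map $(x,y)\mapsto(x^2+x^{-2},\,y/x^3)$ onto the elliptic curve $E'_\lambda:y^2=x^3-\lambda x^2-36x+4\lambda$. Then (VC) follows from Proposition~\ref{f-involution} because $\cA_4$ contains explicit $f$-involutions such as $(x,y)\mapsto(1/x,-y/x^6)$; the isomorphism $X/\cA_4\cong\bbp^1(K)$ is verified by Riemann--Hurwitz together with the $K$-rational point $(0,1)$; and (NC1), (NC2) follow from Proposition~\ref{non-constant} via Lemma~\ref{non-constant_A4}. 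The hypothesis $\sqrt{-1}\in K$ makes the generators $\sigma,\mu$ of $\cA_4$ defined over $K$, and $\sqrt{a(1-a)(a^2-a+1)}\in K$ is exactly what makes the quadratic twist $\phi:E_a\to E'_\lambda$ (with $\lambda=2(a+1)(a-2)(2a-1)/(a(a-1))$ and twisting factor $u=4(a^2-a+1)/(a(a-1))$) an isomorphism over $K$. So the gap is genuine: the proposal supplies no workable construction of $(X,f)$, and its suggested one is obstructed.
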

\begin{proof}
    Fix $\lambda \in K$ such that $\lambda^2 + 108 \neq 0$. Let
    $X_\lambda/K$ be the curve whose affine equation is defined by
    \[X_\lambda : y^2 = x^{12}-\lambda x^{10}-33x^8+2\lambda x^6-33x^4-\lambda x^2+1.\]
    The discriminant of the polynomial on the right side is 
    \[2^{52}(\lambda^2+108)^8 \neq 0,\] 
    under the assumption, so $X_\lambda$ is a hyperelliptic curve of genus $5$. 
    
    The automorphism group $\Aut_K(X_\lambda)$ contains $C_2 \times \cA_4$ by \cite[Section~4.3]{Sha03}. Explicitly, the subgroup of $\Aut_K(X_\lambda)$ generated by the hyperelliptic involution $\iota$ defined by 
    \[\iota(x,y) = (x,-y)\]
    is isomorphic to $C_2$, and the subgroup generated by $\sigma$ and $\mu$, where
    \begin{equation}\label{sigma-mu2}
        \sigma(x,y) = \left(\sqrt{-1}\dfrac{x+1}{x-1}, \dfrac{8\sqrt{-1}y}{(x-1)^6}\right), \quad
        \mu(x,y) = \left(-x,-y\right),
    \end{equation}
    is isomorphic to $\cA_4$. We let $G=\langle \sigma,\mu \rangle \cong \cA_4$ and $H = \langle \iota\sigma^2\mu\sigma, \mu\rangle$. If we explicitly write $\iota\sigma^2\mu\sigma$, then
    \[ \iota\sigma^2\mu\sigma(x,y) = \left(\dfrac{1}{x},\dfrac{y}{x^6}\right).\]

    Let $E_\lambda'/K$ be an elliptic curve whose affine equation is defined by
    \[E_\lambda' : y^2=x^3-\lambda x^2-36x+4\lambda,\]
    noting that the discriminant is 
    \[2^{4}(\lambda^2+108)^2 \neq 0\] 
    under our assumption.

    Define a morphism $f_\lambda : X_\lambda \rightarrow E'_\lambda$ over $K$ by
    \[f_\lambda(x,y)=\left(x^2+\frac{1}{x^2}, \frac{y}{x^3}\right).\]
    This is the quotient map $X_\lambda \rightarrow X_\lambda/H$. 

    Note that a given elliptic curve $E_a/K :y^2=x(x-1)(x-a)$ has $j$-invariant, 
    \[2^8\dfrac{(a^2-a+1)^3}{a^2(a-1)^2}.\] 
    If we fix
    \[\lambda=\dfrac{2(a+1)(a-2)(2a-1)}{a(a-1)} = \dfrac{2(2a^3-3a^2-3a+2)}{a(a-1)},\]
    then 
    \[j(E_a)= 2^8\frac{(a^2-a+1)^3}{a^2(a-1)^2}=2^4(\lambda^2+108)=j(E'_\lambda).\]
    In fact, if we let
    \[u=\dfrac{4(a^2-a+1)}{a(a-1)},\]
    then $E_a \cong E'_{\lambda}$ over $K(\sqrt{u})$ via an isomorphism $\phi :E_a\rightarrow E'_{\lambda}$ defined by
    \[\phi(x,y)=\left(u x -\dfrac{2(a+1)}{a-1}, \sqrt{u}^3y \right).\]

    Now suppose $K$ contains $\sqrt{a(a-1)(a^2-a+1)}$. Then $\phi:E_a \rightarrow E'_{\lambda}$ is an isomorphism over $K$. Let $f = \phi^{-1} \circ f_\lambda : X_\lambda \rightarrow E_a$ and denote $X_\lambda$ by $X$. To apply Corollary~\ref{explicit_example_corollary}, we have to prove that $X/\cA_4 \cong \bbp^1_K$. Since $X$ has a $K$-rational point $(0,1)$, it suffices to prove that $X/\cA_4$ has genus~$0$.

    Let $g$ be the genus of $X/\cA_4$. Since $X$ is a hyperelliptic curve, $X/(C_2 \times \cA_4)$ has genus~$0$. By applying the Riemann-Hurwitz formula to
    \[\iota : X/\cA_4 \rightarrow X/(C_2 \times \cA_4),\]
    we obtain 
    \[2g-2 = 2(-2) + \sum_{P \in X/\cA_4} (e_P-1)\]
    where $e_P$ is a ramification index at $P$. As $\iota$ is a map of degree~$2$, the summation over $P \in X/\cA_4$ on the right equals the number $N$ of points ramified under $\iota$. The number of such points is equal to the number of $(C_2 \times \cA_4)$-orbits of $\overline{K}$-rational points of $X$ which are the same as $\cA_4$-orbits. Some calculations show that there are exactly two such orbits which are
    \[\{(\alpha_1,0),(\alpha_2,0),\ldots,(\alpha_{12},0)\}\]
    where $\alpha_i$ are distinct roots of the polynomial 
    \[x^{12}-\lambda x^{10}-33 x^8+2\lambda x^6-33 x^4-\lambda x^2+1\] 
    and
    \[\{(0,\pm 1),\pm \infty,(\pm \sqrt{-1},\pm 8\sqrt{-1}),(\pm 1, \pm 8\sqrt{-1})\}\]
    where $\pm \infty$ are the two infinity points of $X$. Therefore $N=2$ and $g=0$. 

    Recall that $f$ is the quotient map $X \rightarrow X/H$ where $H$ is generated by $\iota\sigma^2\mu\sigma$ and $\mu$. Let $G_1,G_2$ be defined by 
    \[G_1 := \langle \sigma^{-1}H\sigma \cap \sigma^{-2}H\sigma^2, H \rangle,\quad G_2 := \langle \sigma,H \rangle.\]
    It is obvious that $G_2 = C_2 \times \cA_4$, so $X/G_2$ has genus 0. Next, note that $\iota\mu \in \sigma^{-1}H\sigma \cap \sigma^{-2}H\sigma^2$. Thus
    \[\iota \in \langle \sigma^{-1}H\sigma \cap \sigma^{-2}H\sigma^2, H \rangle = G_1,\]
    so that $X/G_1$ has genus 0. Now by applying Corollary~\ref{p_cyclic_N_corollary} finally, the proof completes.
\end{proof}

We conclude this section with a complementary example in which the rank bound is obtained from the decomposition of the modular Jacobian. 

\begin{theorem}
    Let $K$ be a number field containing $\zeta_7$. Let 
    \[E/K : y^2+3xy+y=x^3-2x-3\]
    be an elliptic curve over $K$. Then there exist infinitely many mutually linearly disjoint $\mathrm{PSL}_2(\bbz/7\bbz)$-extensions $L_i/K$ such that 
    \[\rank(E(L_i)) \geq \rank(E(K)) + 6.\]
\end{theorem}
\begin{proof}
    Consider the modular curve~$X(7)$ and its Jacobian $J(7)$. By \cite[p.12]{BKX13}, the Jacobian $J(7)$ of the modular curve $X(7)$ is isomorphic to $E^3$ over $K$. By Corollary~\ref{modular_curve} and Corollary~\ref{Jacobian_factor_corollary}, and noting that $E/K$ has complex multiplication over $K$ ($j(E)=-3375$), the conclusion follows.
\end{proof}

\section{Rank growth over symmetric extensions}\label{section_Sn}

In this section, we apply Corollary~\ref{S_n_N_corollary} to obtain rank growth of abelian varieties over $\cS_n$-extensions. We show that every abelian variety admits a threshold beyond which the rank growth of at least $n$ occurs over infinitely many mutually linearly disjoint $\cS_{n+1}$-extensions. We then discuss the least such threshold.

\begin{lemma}\label{curve_monodromy_group}
    Let $X/K$ be a smooth projective curve of genus $g \geq 1$ over a number field $K$ with $X(K)$ nonempty. Then for every $d \geq 2g+1$, there exists a finite morphism $p:X \rightarrow \bbp_K^1$ over $K$ of degree $d$ with monodromy group $\cS_d$.
\end{lemma}
\begin{proof}
    Take $P \in X(K)$ and let $\mathcal{L}$ be the line bundle corresponding to the divisor $dP$. Since $d \geq 2g+1$, $\mathcal{L}$ is very ample. Therefore, $\mathcal{L}$ defines a projective embedding $X \hookrightarrow \bbp_K^r$ and the degree of $X$ in $\bbp_K^r$ is $d$. 
    
    Consider the Grassmannian $\mathbb{G}(r-2,\bbp_K^r)$ of $(r-2)$-planes of $\bbp_K^r$. By \cite[Theorem~1.1]{PS05}, after base change to $\overline{K}$, the closure of the locus of non-uniform $(r-2)$-planes has codimension at least 2. Since non-uniformity is invariant under $\Gal(\overline{K}/K)$, this closure is defined over $K$. Removing also the proper closed locus of $(r-2)$-planes meeting $X$, we obtain a nonempty open subset of $\mathbb{G}(r-2,\bbp_K^r)$ defined over $K$. Choose a $K$-rational point $L$ of this subset.

    Now let $p:X \rightarrow \bbp_K^1$ be the morphism induced by the projection from $L$. Then by our choice of $L$, $p$ has degree $d$ and monodromy group $\cS_d$.
\end{proof}

\begin{lemma}\label{S_d_extension}
    Let $A/K$ be an abelian variety over a number field $K$. Let $X/K$ be a smooth projective curve, $j:X \rightarrow A$ be a non-constant morphism over $K$, and $p:X \rightarrow \bbp_K^1$ be a finite morphism over $K$ of degree $d$ with monodromy group $\cS_d$.

    Then there exist infinitely many mutually linearly disjoint $\cS_d$-extensions $L_i/K$ such that 
    \[\rank(A(L_i))\geq \rank(A(K))+(d-1).\]
\end{lemma}
\begin{proof}
    Let $q:Y \rightarrow \bbp_K^1$ be the Galois closure of $p:X \rightarrow \bbp_K^1$. Then $Y/G \cong \bbp_K^1$ and $Y/H \cong X$ for some $H \leq G \leq \Aut_K(Y)$, where 
    \[G \cong \cS_d,\quad H \cong \cS_{d-1}.\]
    Let $\pi:Y \rightarrow X$ be the quotient morphism and define 
    \[f:Y \xrightarrow{\pi} X \xrightarrow{j} A.\]
    Since $j$ is non-constant, so is $f$. Now apply Corollary~\ref{S_n_N_corollary} to $Y$ and $f$.
\end{proof}

\begin{theorem}\label{abelian_variety_S_n_extension}
    Let $A/K$ be an abelian variety over a number field $K$. Let $X/K$ be a smooth projective curve of genus $g \geq 1$ with $X(K)$ nonempty and $j:X \rightarrow A$ be a non-constant morphism over $K$. Then for every $n \geq 2g$, there exist infinitely many mutually linearly disjoint $\cS_{n+1}$-extensions $L_i/K$ such that 
    \[\rank(A(L_i)) \geq \rank(A(K)) + n.\]
\end{theorem}
\begin{proof}
    The theorem follows from Lemmas~\ref{curve_monodromy_group} and \ref{S_d_extension}.
\end{proof}

\begin{theorem}\label{N_A_existence}
    Let $A/K$ be an abelian variety over a number field $K$. Then there exists a positive integer $N$ such that for every $n \geq N$, there exist infinitely many mutually linearly disjoint $\cS_{n+1}$-extensions $L_i/K$ such that 
    \[\rank(A(L_i)) \geq \rank(A(K)) + n.\]
\end{theorem}
\begin{proof}
    Let $d$ be the dimension of $A$. Fix $P \in A(K)$ and a projective embedding $A \hookrightarrow \bbp_K^r$. By Bertini's theorem with $P \in A(K)$, we can choose general hypersurfaces $H_1,\ldots,H_{d-1}$ defined over $K$, all containing $P$, such that 
    \[X := A \cap H_1 \cap \cdots \cap H_{d-1}\]
    is a smooth projective curve. Then $P \in X(K)$, so $X(K)$ is nonempty. Let $g$ be the genus of $X/K$ and $j:X \rightarrow A$ be the inclusion. Then Theorem~\ref{abelian_variety_S_n_extension} gives the conclusion with $N = 2g$. 
\end{proof}

Combining Theorem~\ref{N_A_existence} with Corollary~\ref{main_theorem_fixed_field_corollary}, we also obtain the following theorem.

\begin{theorem}
    Let $A/K$ be an abelian variety over a number field $K$. Then there exists a positive integer $N$ such that for every $n \geq N$, there exist infinitely many degree-$n$ extensions $F_i/K$ with Galois closure $\cS_n$ such that 
    \[\rank(A(F_i)) > \rank(A(K)).\]
\end{theorem}
\begin{proof}
    Recall that we have a smooth projective curve $X/K$ of genus $g$ in $A/K$. Take $N=2g+1$ and suppose $n \geq N$. By Lemma~\ref{curve_monodromy_group}, there exists a finite morphism $p:X \rightarrow \bbp_K^1$ of degree $n$ with monodromy group $\cS_n$. As in the proof of Lemma~\ref{S_d_extension}, let $q:Y \rightarrow \bbp_K^1$ be the Galois closure of $p:X \rightarrow \bbp_K^1$, so that $Y/G \cong \bbp_K^1$ and $Y/H \cong X$ for some $H \leq G \leq \Aut_K(Y)$, where 
    \[G \cong \cS_n,\quad H \cong \cS_{n-1}.\]
    By Corollary~\ref{main_theorem_fixed_field_corollary}, there exist infinitely many mutually linearly disjoint $G$-extensions $L_i/K$ such that 
    \[\rank(A(L_i^H)) > \rank(A(K))\]
    If we take $F_i = L_i^H$, then the proof is completed.
\end{proof}

In view of Theorem~\ref{N_A_existence}, we can define the following invariant.

\begin{definition}
    Let $A/K$ be an abelian variety over a number field $K$. We define $N_{A/K}$ to be the least positive integer $N$ such that for every $n \geq N$, there exist infinitely many mutually linearly disjoint $\cS_{n+1}$-extensions $L_i/K$ such that 
    \[\rank(A(L_i)) \geq \rank(A(K)) + n.\]
\end{definition}

Theorem~\ref{abelian_variety_S_n_extension} shows that 
\[N_{A/K} \leq 2g,\]
whenever there exists a smooth projective curve $X/K$ of genus $g \geq 1$ with $X(K)$ nonempty and a non-constant morphism $j:X \rightarrow A$ over $K$. In particular, this yields the following theorem.

\begin{theorem}\label{elliptic_to_abelian_N_1}
    Let $A/K$ be an abelian variety over a number field $K$. Suppose that there exists an elliptic curve $E/K$ and a non-constant morphism $j:E \rightarrow A$ over $K$. Then $N_{A/K}=1$.
\end{theorem}
\begin{proof}
    By Theorem~\ref{abelian_variety_S_n_extension}, we have $N_{A/K} \leq 2$. To prove $N_{A/K}=1$, we have to prove that there exist infinitely many mutually linearly disjoint $\cS_2$-extensions $L_i/K$ such that 
    \[\rank(A(L_i)) \geq \rank(A(K))+1.\]
    This follows from Lemma~\ref{S_d_extension} with the morphism $p:E \rightarrow E/\langle \pm 1 \rangle \cong \bbp_K^1$.
\end{proof}

\begin{corollary}\label{elliptic_N_1}
    Let $E/K$ be an elliptic curve over a number field $K$. Then $N_{E/K}=1$.
\end{corollary}

It is natural to ask whether $N_{A/K}=1$ holds for every abelian variety $A/K$ over every number field $K$. Observe that this would imply that there exist infinitely many mutually linearly disjoint quadratic extensions $L_i/K$ such that 
\[\rank(A(L_i)) \geq \rank(A(K))+1.\]
Therefore, the rank of $A(K^{(2)})$ will be infinite where $K^{(2)}$ denotes the compositum of all quadratic extensions of $K$. In particular, this would imply the Frey--Jarden conjecture, which we introduce later. Thus, the assertion $N_{A/K}=1$ appears to be significantly stronger than what is currently known.

Instead, it seems more reasonable to seek effective and sharp upper bounds for $N_{A/K}$.

\begin{question}
    Can we determine effective and sharp upper bounds for $N_{A/K}$?
\end{question}

\section{Connection with infinite rank conjectures}\label{section_infinite_rank}

In this section, we apply Theorem~\ref{main_theorem} to infinite rank conjectures. For convenience, we fix an algebraic closure $\overline{K}$ of $K$ and let $G_K := \Gal(\overline{K}/K)$ be the absolute Galois group of $K$. 

\subsection{Frey--Jarden conjecture}

Let $A/K$ be an abelian variety over a number field $K$. In \cite[p.127, Problem]{FJ74}, Frey and Jarden asked whether the rank of $A(K^{\mathrm{ab}})$ is infinite, where $K^{\mathrm{ab}}$ is the maximal abelian extension of $K$.

\begin{conjecture}[Frey--Jarden]\label{Frey_Jarden}
    Let $A/K$ be an abelian variety over a number field $K$. Then the rank of $A(K^{\mathrm{ab}})$ is infinite.
\end{conjecture}

Several important cases of Conjecture~\ref{Frey_Jarden} are known. Rosen and Wong \cite{RW02} proved Conjecture~\ref{Frey_Jarden} for Jacobian varieties of cyclic covers of $\bbp^1$. Petersen \cite{Pet06} generalized this result to quotients of Jacobian varieties of abelian covers of $\bbp^1$. Sairaiji and Yamauchi~\cite{SY12} proved Conjecture~\ref{Frey_Jarden} for Jacobian varieties $J/K$ of curves $C/K$ when $C(K^{\mathrm{ab}})$ is infinite. They also proved Conjecture~\ref{Frey_Jarden} for abelian varieties over $\bbq$ of $\mathrm{GL}_2$-type with trivial character. One can find more information about Conjecture~\ref{Frey_Jarden} in~\cite{SY12}.

We now present our application of Theorem~\ref{main_theorem} to Conjecture~\ref{Frey_Jarden}.

\begin{lemma}\label{compositum_lemma}
    Let $A/K$ be an abelian variety over a number field $K$. Let $L_i/K$ be infinitely many mutually linearly disjoint finite extensions such that 
    \[\rank(A(L_i)) > \rank(A(K)).\]
    Let $L/K$ be the compositum of all extensions $L_i/K$ in $\overline{K}$. Then the rank of $A(L)$ is infinite.
\end{lemma}
\begin{proof}
    For each $i$, take $P_i \in A(L_i)$ which is linearly independent from $A(K)$. Assume the rank of $A(L)$ is finite. Then there exists a positive integer $N$ and integers $a_1,\ldots,a_N$, not all zero, such that 
    \begin{equation}\label{lineary_dependent}
        a_1P_1 + a_2P_2 + \cdots + a_NP_N = O.
    \end{equation}
    Without loss of generality, assume $a_1 \neq 0$. Since 
    \[L_1 \cap L_2 \cdots L_N = K,\]
    we have 
    \[a_1P_1 = -(a_2P_2 + \cdots + a_NP_N) \in A(L_1) \cap A(L_2 \cdots L_N) = A(K),\]
    which is a contradiction.
\end{proof}

\begin{theorem}\label{Frey_Jarden_theorem}
    Let $A/K$ be an abelian variety over a number field $K$. Suppose there exists a smooth projective variety $X/K$ with a finite abelian subgroup $\Gamma \subseteq \Aut_K(X)$ such that $X/\Gamma \cong \bbp^d_K$, and there exists a non-constant morphism $f:X \rightarrow A$ over $K$. Then the rank of $A(K^{\mathrm{ab}})$ is infinite.
\end{theorem}
\begin{proof}
    By Corollary~\ref{main_theorem_corollary}, there exist infinitely many mutually linearly disjoint $\Gamma$-extensions $L_i$ over $K$ such that 
    \[\rank(A(L_i)) > \rank(A(K)).\]
    Let $L/K$ be the compositum of all extensions $L_i/K$ in $\overline{K}$. By Lemma~\ref{compositum_lemma}, the rank of $A(L)$ is infinite. Since $\Gamma$ is abelian, $L$ is contained in $K^{\mathrm{ab}}$, hence the rank of $A(K^{\mathrm{ab}})$ is infinite.
\end{proof}

\begin{rmk}
    The proof actually yields a stronger conclusion than Theorem~\ref{Frey_Jarden_theorem}. It proves that the rank of $A(L)$ is infinite, where $L$ is the compositum of all $\Gamma$-extensions of $K$.
\end{rmk}

\subsection{Larsen's conjecture}

Let $A/K$ be an abelian variety over a number field $K$. Frey and Jarden \cite{FJ74} proved that if $n$ is a positive integer, then for almost every tuple $(\sigma_1,\ldots,\sigma_n) \in G_K^n$, the rank of $A(\overline{K}^{\langle \sigma_1,\ldots,\sigma_n \rangle})$ is infinite, where $\overline{K}^{\langle \sigma_1,\ldots,\sigma_n \rangle}$ is the fixed field of $\overline{K}$ under $\sigma_1,\ldots,\sigma_n$. Motivated by this result, Larsen \cite[Question~1]{Lar03} asked whether the same conclusion holds for every tuple $(\sigma_1,\ldots,\sigma_n) \in G_K^n$.

\begin{conjecture}[Larsen]\label{Larsen}
    Let $A/K$ be an abelian variety over a number field $K$. Let~$G$ be a topologically finitely generated subgroup of $G_K := \Gal(\overline{K}/K)$. Then the rank of~$A(\overline{K}^G)$ is infinite.
\end{conjecture}

Several important cases of Conjecture~\ref{Larsen} are known. The second author and Larsen~\cite{IL08} proved Conjecture~\ref{Larsen} when $G$ is procyclic. Tim and Vladimir Dokchitser \cite{DD09} proved Conjecture~\ref{Larsen} for elliptic curves over $\bbq$, assuming either the rank part of the Birch and Swinnerton-Dyer conjecture or the finiteness of the Tate-Shafarevich group over arbitrary number fields. The second author and Larsen \cite{IL13} proved Conjecture~\ref{Larsen} for Jacobian varieties of split hyperelliptic curves. The first and second authors \cite{CI26} proved Conjecture~\ref{Larsen} for elliptic curves over $\bbq$ of analytic rank at most one. In a recent preprint, the second author and Larsen \cite{IL25} proved Conjecture~\ref{Larsen} for all elliptic curves. One can find more information about Conjecture~\ref{Larsen} in \cite{IL21}.

We now present our application of Theorem~\ref{main_theorem} to Conjecture~\ref{Larsen}. For the proof, we set one notation. For a finite group $\Delta$, we define $d(\Delta)$ to be the minimal number of generators of $\Delta$.

\begin{theorem}\label{Larsen_theorem}
    Let $A/K$ be an abelian variety over a number field $K$. Let $G$ be a topologically finitely generated subgroup of $G_K := \Gal(\overline{K}/K)$ with $n$ generators. Suppose there exists a smooth projective variety $X/K$ with a finite subgroup $\Gamma \subseteq \Aut_K(X)$ such that $X/\Gamma \cong \bbp^d_K$, and there exists a non-constant morphism $f_\Delta:X/\Delta \rightarrow A$ over $K$ for each subgroup $\Delta \leq \Gamma$ such that $d(\Delta) \leq n$. Then the rank of $A(\overline{K}^G)$ is infinite.
\end{theorem}
\begin{proof}
    Define 
    \[M_\Delta := \mathrm{Mor}_K(X/\Delta,A)/A(K)\]
    for each subgroup $\Delta \leq \Gamma$. By Corollary~\ref{main_theorem_fixed_field_corollary}, there exist infinitely many mutually linearly disjoint $\Gamma$-extensions $L_i$ over $K$ such that 
    \[\rank(A(L_i^\Delta)) \geq \rank(A(K)) + \rank(M_\Delta).\]
    We note that the image of $G$ in $\Gal(L_i/K) \cong \Gamma$ is generated by $n$ elements. Therefore, if we let $\Delta$ to be the image of $G$, then 
    \[\rank(A(L_i^G)) \geq \rank(A(K)) + \rank(M_\Delta) > \rank(A(K)).\]
    Let $L$ be the compositum of all extensions $L_i/K$ in $\overline{K}$. Then $L^G$ contains the compositum of all extensions $L_i^G/K$. By Lemma~\ref{compositum_lemma}, the rank of $A(L^G)$ is infinite. Therefore, the rank of $A(\overline{K}^G)$ is infinite.
\end{proof}

\begin{rmk}
    The proof actually yields a stronger conclusion than Theorem~\ref{Larsen_theorem}. It proves that the rank of $A(L^G)$ is infinite, where $L$ is the compositum of all $\Gamma$-extensions of $K$.
\end{rmk}

\end{document}